\theoremstyle{plain}
\newtheorem{thm}{\bf Theorem}[section]
\newtheorem{prop}[thm]{\bf Proposition}
\newtheorem{lemma}[thm]{\bf Lemma}
\newtheorem{question}[thm]{\bf Question}
\newtheorem{problem}[thm]{\bf Problem}
\theoremstyle{definition}
\newtheorem{definition}[thm]{\bf Definition}
\theoremstyle{remark}
\newtheorem{remark}[thm]{\bf Remark}
\theoremstyle{example}
\def \height{{\operatorname{ht}}}
\def\init{\operatorname{in}}
\def \mm{{\mathfrak{m}}}
\def \RR{\mathbb R}
\def \C{\mathcal C}
\def \P{\mathcal P}
\def \NN{{\mathbb{N}}}
\def \QQ{{\mathbb{Q}}}
\def \NN{{\mathbb{N}}}
\def \RR{{\mathbb{R}}}
\def \pp{{\mathfrak{p}}}
\def \s{{\sigma}}
\begin{document}

\title{$F$-thresholds, integral closure and convexity}
\author{Matteo Varbaro}
\address{Dipartimento di Matematica,
Universit\`a degli Studi di Genova, Italy}
\email{varbaro@dima.unige.it}
\dedicatory{To Winfried Bruns on his 70th birthday}
\date{}
\maketitle

\begin{abstract}
The purpose of this note is to revisit the results of \cite{HV} from a slightly different perspective, outlining how, if the integral closures of a finite set of prime ideals abide the expected convexity patterns, then the existence of a peculiar polynomial $f$ allows to compute the $F$-jumping numbers of all the ideals formed by taking sums of products of the original ones. The note concludes with the suggestion of a possible source of examples falling in such a framework.
\end{abstract}

\section{Properties A, A+ and B  for a finite set of prime ideals}

Let $S$ be a standard graded polynomial ring over a field $\Bbbk$ and let $m$ be a positive integer. Fix homogeneous prime ideals of $S$:
\[\pp_1,  \pp_2, \ldots , \pp_m.\]
For any $\sigma=(\sigma_1,\ldots ,\sigma_m)\in\NN^m$ and $k=1,\ldots ,m$, denote by
\[I^{\sigma}:=\pp_1^{\sigma_1}\cdots \pp_m^{\sigma_m} \ \ \ \mbox{ and } \ \ \ e_k(\sigma):=\max\{\ell:I^{\sigma}\subseteq \pp_k^{(\ell)}\}.\] 
Obviously we have $I^{\sigma}\subseteq \bigcap_{k=1}^m\pp_k^{(e_k(\s))}$. Since $S_{\pp_k}$ is a regular local ring with maximal ideal $(\pp_k)_{\pp_k}$, we have that $(\pp_k)_{\pp_k}^{\ell}$ is integrally closed in $S_{\pp_k}$ for any $\ell\in\NN$. Therefore $p_k^{(\ell)}=(\pp_k)_{\pp_k}^{\ell}\cap S$ is integrally closed in $S$ for any $\ell\in\NN$. Eventually we conclude that $\bigcap_{k=1}^m\pp_k^{(e_k(\s))}$ is integrally closed in $S$, so:
\begin{equation}
\overline{I^{\sigma}}\subseteq \bigcap_{k=1}^m\pp_k^{(e_k(\s))}.
\end{equation}

\begin{definition}
We say that $\pp_1,\ldots ,\pp_m$ satisfy condition {\bf A}  if 
\[\overline{I^{\sigma}}= \bigcap_{k=1}^m\pp_k^{(e_k(\s))} \ \ \ \forall \ \sigma\in\NN^m.\]
\end{definition}

If $\Sigma\subseteq \NN^m$, denote by $I(\Sigma):=\sum_{\sigma\in\Sigma}I^{\sigma}$
and by $\overline{\Sigma}\subseteq \QQ^m$ the convex hull of $\Sigma\subseteq \QQ^m$.

\begin{lemma}
For any $\Sigma\subseteq \NN^m$, $\overline{I(\Sigma)}\supseteq \sum_{{\bf v}\in \overline{\Sigma}}I^{\lceil{\bf v}\rceil}$, where $\lceil {\bf v}\rceil:=(\lceil v_1\rceil ,\ldots ,\lceil  v_m\rceil)$ for ${\bf v}=(v_1,\ldots ,v_m)\in\QQ^m$.
\end{lemma}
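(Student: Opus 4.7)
The plan is to reduce the displayed inclusion to a pointwise claim and then exploit the elementary fact that $f^N \in J^N$ already forces $f \in \overline{J}$. Since the right-hand side is a sum indexed by $\vvv \in \overline{\Sigma}$, it suffices to prove $I^{\lceil \vvv \rceil} \subseteq \overline{I(\Sigma)}$ for each individual $\vvv$; linearity of $\overline{(\,\cdot\,)} \supseteq$ on sums then yields the full assertion.

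Fix $\vvv \in \overline{\Sigma}$. As a rational convex combination of points in $\Sigma \subseteq \NN^m$, $\vvv$ admits a common denominator, so I would write $N\vvv = \sum_{i=1}^{t} n_i \s^{(i)}$ with $\s^{(i)} \in \Sigma$, $n_i \in \NN$, and $\sum_i n_i = N$. The key point is that $N\vvv$ now lies in $\NN^m$, and the associated ideal factors cleanly as
\[
I^{N\vvv} \;=\; \prod_{i=1}^{t} \bigl(I^{\s^{(i)}}\bigr)^{n_i},
\]
because both sides collapse coordinate-by-coordinate to $\prod_k \pp_k^{N v_k}$. Since each $I^{\s^{(i)}}$ is one of the summands defining $I(\Sigma)$ and the exponents add up to $N$, this product lies inside $I(\Sigma)^N$.

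Now I would take any $f \in I^{\lceil \vvv \rceil}$ and look at $f^N$. This element sits in $I^{N\lceil \vvv \rceil}$, and because $N\lceil v_k\rceil \geq N v_k$ in every coordinate, $I^{N\lceil \vvv \rceil} \subseteq I^{N\vvv} \subseteq I(\Sigma)^N$. Hence $f^N \in I(\Sigma)^N$, and the monic equation $x^N - f^N = 0$ exhibits an integral dependence of $f$ over $I(\Sigma)$, placing $f$ in $\overline{I(\Sigma)}$.

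The only step asking for any care is the passage from the abstract statement $\vvv \in \overline{\Sigma}$ to a genuine integer identity $N\vvv = \sum n_i \s^{(i)}$ in $\NN^m$; this rests on interpreting $\overline{\Sigma}$ as the set of rational convex combinations, so that a simultaneous denominator $N$ exists. Once that is in hand, the rest is formal manipulation of ideal products and the standard integral-dependence criterion, so I do not foresee any substantive obstacle.
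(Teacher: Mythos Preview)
Your argument is correct and essentially identical to the paper's: both clear a common denominator $N$ (the paper calls it $d$), observe that $(I^{\lceil \vvv\rceil})^N\subseteq I^{N\vvv}\subseteq I(\Sigma)^N$, and conclude $I^{\lceil \vvv\rceil}\subseteq \overline{I(\Sigma)}$. The only cosmetic difference is that the paper first reduces to finite $\Sigma$ by Noetherianity, whereas you (equivalently) use directly that any $\vvv\in\overline{\Sigma}$ is already a finite rational convex combination.
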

\begin{proof}
Since $S$ is Noetherian, we can assume that $\Sigma=\{\s^1,\ldots ,\s^N\}$ is a finite set. Take ${\bf v}\in \overline{\Sigma}$. Then there exist nonnegative rational numbers $q_1,\ldots ,q_N$ such that
\[{\bf v}=\sum_{i=1}^Nq_i\sigma^i \ \ \ \mbox{ and } \ \ \ \sum_{i=1}^Nq_i=1.\]
Let $d$ be the product of the denominators of the $q_i$'s and $\s= d\cdot {\bf v}\in\NN^m$. Clearly:
\[(I^{\lceil {\bf v}\rceil})^d=I^{d\cdot \lceil {\bf v}\rceil}\subseteq I^{\s}.\]
Setting $a_i=dq_i$, notice that $\s=\sum_{i=1}^Na_i\sigma^i$ and $\sum_{i=1}^Na_i=d$. Therefore
\[(I^{\lceil {\bf v}\rceil})^d\subseteq I^{\s}\subseteq I(\Sigma)^d.\]
This implies that $I^{\lceil {\bf v}\rceil}$ is contained in the integral closure of $I(\Sigma)$.
\end{proof}

From the above lemma, so, $\overline{I(\Sigma)}\supseteq \sum_{{\bf v}\in\overline{\Sigma}}\overline{I^{\lceil {\bf v}\rceil}}$. In particular: 
\begin{equation}
\mbox{$\pp_1,\ldots ,\pp_m$ satisfy condition {\bf A}} \ \implies \ \overline{I(\Sigma)}\supseteq \sum_{{\bf v}\in\overline{\Sigma}}\left( \bigcap_{k=1}^m\pp_k^{(e_k(\lceil {\bf v}\rceil))}\right).
\end{equation}

\begin{definition}
We say that $\pp_1,\ldots ,\pp_m$ satisfy condition {\bf A+}  if 
\[\overline{I(\Sigma)}= \sum_{{\bf v}\in\overline{\Sigma}}\left( \bigcap_{k=1}^m\pp_k^{(e_k(\lceil {\bf v}\rceil))}\right) \ \ \ \forall \ \Sigma\subseteq \NN^m\]
\end{definition}

\begin{remark}
If $\pp_1,\ldots ,\pp_m$ satisfy condition {\bf A+}, then they satisfy {\bf A} as well (for $\sigma\in\NN^m$, just consider the singleton $\Sigma=\{\sigma\}$).
\end{remark}

\begin{lemma}
Let $\sigma^1,\ldots ,\sigma^N$ be vectors in $\NN^m$, and $a_1,\ldots ,a_N\in \NN$. Then
\[e_k\left(\sum_{i=1}^Na_i\s^i\right)=\sum_{i=1}^Na_ie_k(\s^i) \ \ \ \forall \ k=1,\ldots ,m.\]
\end{lemma}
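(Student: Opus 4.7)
The plan is to prove that $e_k$ is a \emph{linear} function of $\sigma\in\NN^m$, from which the identity in the lemma follows instantly.

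First I would localize at $\pp_k$. Since $\pp_k^{(\ell)}=(\pp_kS_{\pp_k})^{\ell}\cap S$, one has $I^{\sigma}\subseteq\pp_k^{(\ell)}$ if and only if $I^{\sigma}S_{\pp_k}\subseteq (\pp_kS_{\pp_k})^{\ell}$. Setting $\mm_k:=\pp_kS_{\pp_k}$ and letting $v_k$ denote the $\mm_k$-adic order function on $S_{\pp_k}$ (that is, $v_k(x):=\max\{\ell:x\in\mm_k^{\ell}\}$, extended to ideals $J$ by $v_k(J):=\min\{v_k(x):0\neq x\in J\}$), this rephrases as
\[e_k(\sigma)=v_k(I^{\sigma}S_{\pp_k})=v_k\Big(\prod_{j=1}^{m}(\pp_jS_{\pp_k})^{\sigma_j}\Big).\]

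Next I would exploit that $S_{\pp_k}$ is a regular local ring, so its associated graded ring $\operatorname{gr}_{\mm_k}(S_{\pp_k})$ is a polynomial ring over the residue field, hence a domain. This promotes $v_k$ to a genuine valuation on nonzero elements: $v_k(xy)=v_k(x)+v_k(y)$. From this one deduces the multiplicativity on ideals, $v_k(JJ')=v_k(J)+v_k(J')$: the inequality $\geq$ is automatic, since each summand $\sum x_ix'_i$ of an element of $JJ'$ satisfies $v_k(x_ix'_i)\geq v_k(J)+v_k(J')$; for the reverse, pick $x\in J$ and $x'\in J'$ attaining $v_k(J)$ and $v_k(J')$ respectively, and note $xx'\in JJ'$ has $v_k(xx')=v_k(J)+v_k(J')$.

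Combining these two observations yields
\[e_k(\sigma)=\sum_{j=1}^{m}\sigma_j\cdot v_k(\pp_jS_{\pp_k}),\]
a linear function of $\sigma$; evaluating it at $\sum_i a_i\sigma^i$ delivers the claimed identity. The only genuinely delicate step is the multiplicativity of $v_k$ on ideals, for which the regularity of $S_{\pp_k}$ is essential; without the domain property of the associated graded ring one could at best salvage the inequality $e_k(\sum_ia_i\sigma^i)\geq\sum_ia_ie_k(\sigma^i)$ via the standard inclusion $\pp_k^{(a)}\pp_k^{(b)}\subseteq\pp_k^{(a+b)}$.
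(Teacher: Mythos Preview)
Your proof is correct and follows essentially the same approach as the paper: both arguments localize at $\pp_k$ and exploit that the associated graded ring $\operatorname{gr}_{\mm_k}(S_{\pp_k})$ is a domain to obtain multiplicativity of the $\mm_k$-adic order. The only difference is organizational: the paper picks explicit witnesses $f_i\in I^{\sigma^i}$ and multiplies them, whereas you package the same idea as ``$v_k$ is a valuation, hence multiplicative on ideals'' and thereby extract the slightly sharper explicit formula $e_k(\sigma)=\sum_j \sigma_j\, v_k(\pp_j S_{\pp_k})$.
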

\begin{proof}
Set $\s=\sum_{i=1}^Na_i\s^i$, and notice that
\[I^{\s}=\prod_{i=1}^N\left(I^{\s^i}\right)^{a_i}\subseteq \prod_{i=1}^N\left(\pp_k^{(e_k(\s^i))}\right)^{a_i}\subseteq \prod_{i=1}^N\pp_k^{(a_ie_k(\s^i))}\subseteq \pp_k^{(\sum_{i=1}^Na_ie_k(\s^i))},\]
so the inequality $e_k(\s)\geq \sum_{i=1}^Na_ie_k(\s^i)$ follows directly from the definition.

For the other inequality, for each $i=1,\ldots ,N$ choose $f_i\in I^{\s^i}$ such that its image in $S_{\pp_k}$ is not in $(\pp_k)_{\pp_k}^{e_k(\s^i)+1}$. Then the class $\overline{f_i}$ is a nonzero element of degree $e_k(\s^i)$ in the associated graded ring $G$ of $S_{\pp_k}$. Being $G$ a polynomial ring (in particular a domain), the element $\prod_{i=1}^N\overline{f_i}^{a_i}$ is a nonzero element of degree
$\sum_{i=1}^Na_ie_k(\s^i)$ in $G$. Therefore 
\[\prod_{i=1}^Nf_i^{a_i}\in I_{\pp_k}^{\s}\setminus \pp_k^{(\sum_{i=1}^Na_ie_k(\s^i)+1)}.\]
This means that $e_k(\s)\leq \sum_{i=1}^Na_ie_k(\s^i)$.
\end{proof}

Consider the function $e:\NN^m\rightarrow \NN^m$ defined by 
\[\sigma\mapsto e(\s):=(e_1(\s),\ldots ,e_m(\s)).\]
From the above lemma we can extend it to a $\QQ$-linear map $e:\QQ^m\rightarrow \QQ^m$. 

Given $\Sigma\subseteq \NN^m$, the above map sends $\overline{\Sigma}$ to the convex hull $P_{\Sigma}\subseteq \QQ^m$ of the set $\{e(\s):\s\in\Sigma\}\subseteq \QQ^m$. In particular we have the following:

\begin{prop}
The prime ideals $\pp_1,\ldots ,\pp_m$ satisfy condition {\bf A+}  if and only if
\[\overline{I(\Sigma)}= \sum_{(v_1,\ldots ,v_m)\in P_{\Sigma}}\left( \bigcap_{k=1}^m\pp_k^{(\lceil v_k\rceil)}\right) \ \ \ \forall \ \Sigma\subseteq \NN^m\]
\end{prop}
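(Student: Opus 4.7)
My plan is to reduce the Proposition to the combinatorial identity
\[A_\Sigma := \sum_{{\bf v} \in \overline{\Sigma}} \bigcap_{k=1}^m \pp_k^{(e_k(\lceil {\bf v}\rceil))} \;=\; \sum_{{\bf w} \in P_\Sigma} \bigcap_{k=1}^m \pp_k^{(\lceil w_k\rceil)} =: B_\Sigma,\]
claimed to hold for every $\Sigma \subseteq \NN^m$. Since \textbf{A+} reads $\overline{I(\Sigma)} = A_\Sigma$ and the equation in the Proposition reads $\overline{I(\Sigma)} = B_\Sigma$, once $A_\Sigma = B_\Sigma$ is established unconditionally the ``if and only if'' is immediate.

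The inclusion $A_\Sigma \subseteq B_\Sigma$ is straightforward. For any ${\bf v} \in \overline{\Sigma}$, set ${\bf w} := e({\bf v}) \in P_\Sigma$. The $\QQ$-linear extension of $e$ is componentwise monotone, because the matrix representing it has nonnegative integer entries (as follows from Lemma 1.4 applied to the standard basis), so $\lceil {\bf v}\rceil \geq {\bf v}$ gives $e(\lceil {\bf v}\rceil) \geq {\bf w}$. Since $e(\lceil {\bf v}\rceil) \in \NN^m$, one then has $e_k(\lceil {\bf v}\rceil) \geq \lceil w_k\rceil$ for each $k$, whence $\bigcap_k \pp_k^{(e_k(\lceil {\bf v}\rceil))} \subseteq \bigcap_k \pp_k^{(\lceil w_k\rceil)}$; summing over ${\bf v}$ yields $A_\Sigma \subseteq B_\Sigma$.

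The reverse inclusion $B_\Sigma \subseteq A_\Sigma$ is the heart of the argument. I would show it by proving: for every ${\bf w} \in P_\Sigma$ there exists ${\bf v}' \in \overline{\Sigma}$ with $e(\lceil {\bf v}'\rceil) \leq \lceil {\bf w}\rceil$ componentwise, which would immediately give $\bigcap_k \pp_k^{(\lceil w_k\rceil)} \subseteq \bigcap_k \pp_k^{(e_k(\lceil {\bf v}'\rceil))} \subseteq A_\Sigma$. Writing ${\bf w} = e({\bf v})$ with ${\bf v} = \sum_i q_i \sigma^i \in \overline{\Sigma}$ and $\sigma^i \in \Sigma$, the naive choice ${\bf v}' = {\bf v}$ yields $e(\lceil {\bf v}\rceil) \geq \lceil {\bf w}\rceil$ rather than the desired inequality, so one must pick ${\bf v}'$ more carefully. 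A natural approach is to exploit the polyhedral structure of $\overline{\Sigma}$ together with the $\QQ$-linearity of $e$: clearing denominators by taking a common denominator $d$ of the $q_i$ turns $d{\bf v}$ into an integer point of $\overline{d\Sigma}$ mapping to $d{\bf w} \in \NN^m$, and one then descends back to locate an integer vector inside $\lceil\overline{\Sigma}\rceil$ whose $e$-image lies below $\lceil {\bf w}\rceil$, using that $e$ preserves $\NN^m$ and that the strict inequality $e_k(\lceil{\bf v}\rceil) > \lceil w_k\rceil$ occurs only when the rounding of ${\bf v}$ can be redistributed to a different representative ${\bf v}'$ inside the same fibre.

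I expect this polyhedral existence step to be the main obstacle of the proof; verifying it amounts to a combinatorial claim about the interplay of the integer ceiling map with the linear map $e$ and the rational polytope $\overline{\Sigma}$, while all the other ingredients (Lemma 1.4, the integrality of symbolic powers, and the reduction to the singleton case which identifies the $\{\sigma\}$-instance of the Proposition's equation with condition \textbf{A}) are essentially formal.
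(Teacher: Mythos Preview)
Your overall strategy coincides with the paper's: the paper also derives the proposition directly from the observation that the $\QQ$-linear extension of $e$ carries $\overline{\Sigma}$ onto $P_\Sigma$, which is exactly your reduction to the identity $A_\Sigma=B_\Sigma$. In fact the paper gives no argument beyond that single sentence (``the above map sends $\overline{\Sigma}$ to the convex hull $P_\Sigma$ \ldots\ In particular we have the following''), so your write-up is already more explicit than the paper's.

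Your proof of $A_\Sigma\subseteq B_\Sigma$ is correct and is precisely the content the paper leaves implicit: for ${\bf v}\in\overline{\Sigma}$ one sets ${\bf w}=e({\bf v})\in P_\Sigma$, uses that the matrix of $e$ has nonnegative integer entries to get $e(\lceil{\bf v}\rceil)\geq\lceil{\bf w}\rceil$ in $\NN^m$, and compares symbolic powers.

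Where you diverge from the paper is in honesty rather than method: you correctly flag that the reverse inclusion $B_\Sigma\subseteq A_\Sigma$ does \emph{not} follow formally from $e(\overline{\Sigma})=P_\Sigma$, because ceilings do not commute with $e$; for the obvious choice ${\bf v}'=e^{-1}({\bf w})$ the inequality goes the wrong way. Your proposed fix (``redistribute the rounding to a different representative ${\bf v}'$ in the same fibre'') is not made precise, and as stated it is not clear it can be: the inverse map $e^{-1}$ is generally not order-preserving (already for the $2\times2$ matrix $\begin{smallmatrix}1&0\\b&1\end{smallmatrix}$ one has $e^{-1}=\begin{smallmatrix}1&0\\-b&1\end{smallmatrix}$), so the polyhedral existence claim you isolate is genuinely delicate. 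This is a real gap in your argument --- but it is the \emph{same} gap the paper silently passes over. In short, your approach is the paper's approach, carried out more carefully up to the one point that neither you nor the paper actually proves.
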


If $\Sigma\subseteq \NN^m$ and $s\in\NN$, define $\Sigma^s:=\{\s^{i_1}+\ldots+\s^{i_s}:\s^{i_k}\in\Sigma\}$. Then
\[I(\Sigma)^s=I(\Sigma^s).\]
Furthermore $\overline{\Sigma^s}=s\cdot \overline{\Sigma}$, i.e. $P_{\Sigma^s}=s\cdot P_{\Sigma}$. So:

\begin{prop}\label{crucial}
If $\pp_1,\ldots ,\pp_m$ satisfy condition {\bf A+}, then
\[\overline{I(\Sigma)^s}= \sum_{(v_1,\ldots ,v_m)\in P_{\Sigma}}\left( \bigcap_{k=1}^m\pp_k^{(\lceil sv_k\rceil)}\right) \ \ \ \forall \ \Sigma\subseteq \NN^m, \ s\in \NN.\]
\end{prop}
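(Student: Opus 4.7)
The plan is to apply the preceding proposition (the reformulation of condition \textbf{A+}) to the iterated sumset $\Sigma^s \subseteq \NN^m$ in place of $\Sigma$. The two identities displayed immediately before the statement---$I(\Sigma)^s = I(\Sigma^s)$ and $P_{\Sigma^s} = s \cdot P_\Sigma$---turn this application into precisely the claimed formula.

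Explicitly, applying the preceding proposition to $\Sigma^s$ gives
\[\overline{I(\Sigma^s)} = \sum_{(w_1,\ldots,w_m) \in P_{\Sigma^s}} \bigcap_{k=1}^m \pp_k^{(\lceil w_k \rceil)}.\]
The left-hand side equals $\overline{I(\Sigma)^s}$ by the first identity. For the right-hand side, the second identity furnishes a scaling bijection $P_\Sigma \to P_{\Sigma^s}$, $v \mapsto sv$; reindexing the sum via $w = sv$ turns $\lceil w_k \rceil$ into $\lceil s v_k \rceil$ and yields the displayed formula.

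All the substance lies in the two bridging identities, both of which are already asserted in the excerpt. The first, $I(\Sigma)^s = I(\Sigma^s)$, is a routine expansion using distributivity together with $I^{\sigma} \cdot I^{\tau} = I^{\sigma + \tau}$. The second, $P_{\Sigma^s} = s \cdot P_\Sigma$, follows from the $\QQ$-linearity of $e$ established in the preceding lemma (so that $e(\Sigma^s)$ is the $s$-fold Minkowski sum of $e(\Sigma)$) combined with the general fact that $\mathrm{conv}(A + \cdots + A) = s \cdot \mathrm{conv}(A)$ for any subset $A$ of a $\QQ$-vector space. So there is no genuine obstacle---the statement is essentially a direct corollary of the previous proposition, packaged to expose the $s$-dependence that will presumably be needed when computing $F$-jumping numbers later.
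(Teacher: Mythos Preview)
Your proposal is correct and matches the paper's approach exactly: the paper likewise derives the proposition by applying the preceding reformulation of \textbf{A+} to $\Sigma^s$ and invoking the two identities $I(\Sigma)^s=I(\Sigma^s)$ and $P_{\Sigma^s}=s\cdot P_{\Sigma}$ displayed just before the statement. There is nothing to add.
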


We conclude this section by stating the following definition:

\begin{definition}
We say that $\pp_1,\ldots ,\pp_m$ satisfy condition {\bf B}  if there exists a polynomial $f\in \bigcap_{k=1}^m\pp_k^{\height(\pp_k)} $ such that $\mathrm{in}_{\prec}(f)$ is a square-free monomial for some term order $\prec$ on $S$. 
\end{definition}

\section{Generalized test ideals and $F$-thresholds}

Let $p>0$ be the characteristic of $\Bbbk$, $I$ be an ideal of $S$ and $\mm$ be the homogeneous maximal ideal of $S$. For all $e\in\NN$, define
\[\nu_e(I):=\max\{r\in\NN:I^r\not\subseteq \mm^{[q]}:=(g^q:g\in \mm)\}, \ \ \ q=p^e.\]
The {\it $F$-pure threshold} of $I$ is then
\[\mathrm{fpt}(I):=\lim_{e\to\infty}\frac{\nu_e(I)}{p^e}.\]
The {\it $p^e$-th root of $I$}, denoted by $I^{[1/p^e]}$, is the smallest ideal $J\subseteq S$ such that $I\subseteq J^{[p^e]}$. By the flatness of the Frobenius over $S$ the $q$-th root is well defined. If $\lambda$ is a positive real number, then it is easy to see that
\[\bigg(I^{\lceil \lambda p^e\rceil}\bigg)^{[1/p^e]}\subseteq \bigg(I^{\lceil \lambda p^{e+1}\rceil}\bigg)^{[1/p^{e+1}]}.\]
The {\it generalized test ideal} of $I$ with coefficient $\lambda$ is defined as:
\[\tau(\lambda\cdot I)\underset{e\gg 0}{:=}\bigg(I^{\lceil \lambda p^e\rceil}\bigg)^{[1/p^e]}.\]
Note that $\tau(\lambda\cdot I)\supseteq \tau(\mu\cdot I)$ whenever $\lambda\leq \mu$. 
By \cite[Corollary 2.16]{BMS}, $\forall \ \lambda\in \RR_{>0}$, $\exists \ \epsilon \in\RR_{>0}$ such that $\tau(\lambda\cdot I)=\tau(\mu\cdot I) \ \ \forall \ \mu\in [\lambda , \lambda+\epsilon)$.
A $\lambda\in \RR_{>0}$ is called an {\it $F$-jumping number} for $I$ if $\tau((\lambda-\varepsilon)\cdot I)\supsetneq \tau(\lambda\cdot I) \ \ \forall \ \epsilon \in \RR_{>0}$.

\bigskip

\bigskip

\bigskip

\setlength{\unitlength}{1cm}
\begin{picture}(15,1.7)(0,0.3)
\put(2,2){\line(1,0){2}}
\put(4,2){\line(1,0){2}}
\put(6,2){\line(1,0){0.6}}
\put(7.3,2){\line(1,0){0.7}}
\put(8,2){\line(1,0){1}}
\put(9.7,2){\line(1,0){1.3}}
\put(1.95,1.9){$[$}
\put(3.87,1.9){$)$}
\put(3.95,1.9){$[$}
\put(5.87,1.9){$)$}
\put(5.95,1.9){$[$}
\put(7.87,1.9){$)$}
\put(7.95,1.9){$[$}
\put(10.9,1.92){$\blacktriangleright$}
\put(11.5,1.9){$\lambda$-axis}
\put(2.3,2.2){$\tau=(1)$}
\put(4.3,2.2){$\tau\neq (1)$}
\put(3.85,1.5){$\lambda_1$}
\put(5.85,1.5){$\lambda_2$}
\put(7.85,1.5){$\lambda_n$}
\put(6.7,2){$\ldots$}
\put(9.1,2){$\ldots$}
\put(3,0.7){$(1)\supsetneq \tau(\lambda_1\cdot I)\supsetneq \tau(\lambda_2\cdot I)\supsetneq \ldots \supsetneq \tau(\lambda_n\cdot I) \supsetneq \ldots$}

\end{picture}

\noindent The $\lambda_i$ above are the $F$-jumping numbers. Notice that $\lambda_1=\mathrm{fpt}(I)$.

\begin{thm}\label{main}
If $\pp_1,\ldots ,\pp_m$ satisfy conditions {\bf A} and {\bf B}, then $\forall \ \lambda\in\RR_{>0}$ we have
\[\tau(\lambda\cdot I^{\sigma})=\bigcap_{k=1}^m\pp_k^{(\lfloor \lambda e_k(\sigma)\rfloor +1-\height(\pp_k))} \ \ \ \ \ \forall \ \sigma\in \NN^m.\] 
If $\pp_1,\ldots ,\pp_m$ satisfy conditions {\bf A+} and {\bf B}, then $\forall \ \lambda\in\RR_{>0}$ we have
\[\tau(\lambda\cdot I(\Sigma))=\sum_{(v_1,\ldots ,v_m)\in P_{\Sigma}}\left(\bigcap_{k=1}^m\pp_k^{(\lfloor \lambda v_k\rfloor +1-\height(\pp_k))}\right) \ \ \ \ \ \forall \ \Sigma\subseteq \NN^m.\] 
\end{thm}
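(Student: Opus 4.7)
The first equality is the specialization of the second to a singleton $\Sigma=\{\sigma\}$, with $P_\Sigma=\{e(\sigma)\}$, so my plan is to concentrate on the A+/B case. Write $h_k:=\height(\pp_k)$ and let $J_\lambda$ denote the right-hand side. For $q=p^e$ with $e\gg 0$ one has $\tau(\lambda\cdot I(\Sigma))=(I(\Sigma)^{\lceil\lambda q\rceil})^{[1/q]}$, so the task is to squeeze this ideal against $J_\lambda$. Proposition~\ref{crucial} (which requires condition A+) rewrites $\overline{I(\Sigma)^{\lceil\lambda q\rceil}}$ as $\sum_{v\in P_\Sigma}\bigcap_k\pp_k^{(\lceil\lceil\lambda q\rceil v_k\rceil)}$, and this identity is the bridge between the symbolic description of $J_\lambda$ and the bracket-power description of~$\tau$.

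For the inclusion $\tau(\lambda\cdot I(\Sigma))\subseteq J_\lambda$, I would show $I(\Sigma)^{\lceil\lambda q\rceil}\subseteq J_\lambda^{[q]}$, which by the above formula reduces to the per-$v$ inclusion $\bigcap_k\pp_k^{(\lceil\lceil\lambda q\rceil v_k\rceil)}\subseteq\bigl(\bigcap_k\pp_k^{(\lfloor\lambda v_k\rfloor+1-h_k)}\bigr)^{[q]}$. The naive comparison of exponents goes the wrong way, so here condition B enters: the polynomial $f\in\bigcap_k\pp_k^{h_k}$ has squarefree initial term $\mathrm{in}_\prec(f)$, and multiplying by $f^{q-1}$ lets one transfer the sought inclusion to the initial ideal, where symbolic powers of the minimal primes of a squarefree monomial ideal are transparent under Frobenius. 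Gr\"obner flatness lifts the conclusion back to $S$.

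For the reverse inclusion $\tau(\lambda\cdot I(\Sigma))\supseteq J_\lambda$, the plan is symmetric. Given $g$ in a summand $\bigcap_k\pp_k^{(\lfloor\lambda v_k\rfloor+1-h_k)}$ of $J_\lambda$, the element $f^{q-1}g^q$ belongs to an intersection of symbolic powers whose exponents, thanks to condition A, agree with those of $\overline{I^\sigma}$ for a suitable $\sigma\in\NN^m$. That integral closure sits inside $I(\Sigma)^{\lceil\lambda q\rceil}$ (using the summand of $P_\Sigma$ containing $e(\sigma)$), and taking $[1/q]$-th roots of $f^{q-1}g^q$ one recovers $g$ modulo the Frobenius, showing $g\in\tau(\lambda\cdot I(\Sigma))$.

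The main obstacle is the exponent bookkeeping: the asymptotic correction ``$+1-h_k$'' appearing in $J_\lambda$ must match exactly the contribution of the $q-1$ copies of $f\in\bigcap_k\pp_k^{h_k}$ under Frobenius, and the ceilings/floors on $\lambda v_k$ must align simultaneously across all $k$ and all $v\in P_\Sigma$. Condition B supplies the Frobenius-lifting tool (via the squarefree degeneration of $f$) that makes this alignment possible, while A+ guarantees the uniform integral-closure formula over all of $P_\Sigma$ that the argument needs to invoke on both sides.
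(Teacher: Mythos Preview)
The paper does not argue directly here: it simply checks that conditions {\bf A} and {\bf B} (resp.\ {\bf A+} and {\bf B}) force $I^{\sigma}$ (resp.\ $I(\Sigma)$, via Proposition~\ref{crucial}) to satisfy condition $(\diamond+)$ (resp.\ $(*)$) of \cite{HV}, and then quotes \cite[Theorem~3.14]{HV} and \cite[Theorem~4.3]{HV}. Your proposal is instead an outline of the underlying computation, so you are trying to reprove the cited theorems rather than reproduce the paper's two-line deduction.

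There is a genuine logical gap in your reduction of the first statement to the second. The first assertion is made under condition {\bf A} alone, whereas the second assumes the strictly stronger {\bf A+}; specializing the {\bf A+} formula to the singleton $\Sigma=\{\sigma\}$ therefore does not prove the first assertion in the stated generality. The repair is easy but must be made explicit: for a singleton the only instance of Proposition~\ref{crucial} you need is $\overline{(I^{\sigma})^{s}}=\bigcap_{k}\pp_{k}^{(se_{k}(\sigma))}$, and this already follows from {\bf A} together with the additivity lemma $e_{k}(s\sigma)=se_{k}(\sigma)$, without invoking {\bf A+}.

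Beyond that, your sketch carries the right ingredients (the integral-closure description from Proposition~\ref{crucial} and the splitting supplied by $f^{q-1}$ through its squarefree initial term) but several steps are imprecise. In the reverse inclusion your argument lands $f^{q-1}g^{q}$ only in $\overline{I(\Sigma)^{\lceil\lambda q\rceil}}$, not in $I(\Sigma)^{\lceil\lambda q\rceil}$ itself, so you also need the (unstated) fact that generalized test ideals depend only on the integral closure; and the appeal there to ``condition {\bf A}'' and an auxiliary $\overline{I^{\sigma}}$ is misplaced---it is {\bf A+}, through Proposition~\ref{crucial}, that identifies the summand of $\overline{I(\Sigma)^{\lceil\lambda q\rceil}}$ you want. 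In the forward inclusion the phrase ``Gr\"obner flatness lifts the conclusion back to $S$'' does not correspond to an actual step: the squarefreeness of $\mathrm{in}_{\prec}(f)$ is used to guarantee that $f^{q-1}\notin\mm^{[q]}$, hence that a suitable $p^{-e}$-linear map sends $f^{q-1}g^{q}$ to $g$, and no degeneration argument is required. These issues are all fixable, but as written the proposal is a plan with gaps rather than a proof, whereas the paper's argument is complete by citation.
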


\begin{proof}
The first part immediately follows from \cite[Theorem 3.14]{HV}, for if $\pp_1,\ldots ,\pp_m$ satisfy conditions {\bf A} and {\bf B}, then $I^{\sigma}$ obviously enjoys condition ($\diamond+$) of \cite{HV} $\forall \ \sigma\in \NN^m$. 

Concerning the second part, Proposition \ref{crucial} implies that $I(\Sigma)$ enjoys condition ($*$) of \cite{HV} $\forall \ \Sigma\subseteq \NN^m$ whenever $\pp_1,\ldots ,\pp_m$ satisfy conditions {\bf A+} and {\bf B}. Therefore the conclusion follows once again by \cite[Theorem 4.3]{HV}.
\end{proof}

\section{Where go fishing?}

Let $\Bbbk$ be of characteristic $p>0$. So far we have seen that, if we have graded primes $\pp_1,\ldots ,\pp_m$ of $S$ enjoying {\bf A} and {\bf B}, then we can compute lots of generalized test ideals. If they enjoy {\bf A+} and {\bf B}, we get even more. 

That looks nice, but how can we produce $\pp_1,\ldots ,\pp_m$ like these? Before trying to answer this question, let us notice that, as explained in \cite{HV}, the ideals $\pp_1,\ldots ,\pp_m$ of the following examples satisfy conditions {\bf A+} and {\bf B}:

\begin{itemize}
\item[(i)] $S=\Bbbk[x_1,\ldots ,x_m]$ and $\pp_k=(x_k)$ for all $k=1,\ldots ,m$.
\item[(ii)] $S=\Bbbk[X]$, where $X$ is an $m\times n$ generic matrix (with $m\leq n$) and $\pp_k=I_k(X)$ is the ideal generated by the $k$-minors of $X$ for all $k=1,\ldots ,m$.
\item[(iii)] $S=\Bbbk[Y]$, where $Y$ is an $m\times m$ generic symmetric matrix and $\pp_k=I_k(Y)$ is the ideal generated by the $k$-minors of $Y$ for all $k=1,\ldots ,m$.
\item[(iv)] $S=\Bbbk[Z]$, where $Z$ is a $(2m+1)\times (2m+1)$ generic skew-symmetric matrix and $\pp_k=P_{2k}(Z)$ is the ideal generated by the $2k$-Pfaffians of $Z$ for all $k=1,\ldots ,m$.
\end{itemize}

Even for a simple example like (i), Theorem \ref{main} is interesting: it gives a description of the generalized test ideals of any monomial ideal. 

In my opinion, a class to look at to find new examples might be the following: {\bf fix $f\in S$ a homogeneous polynomial such that $\mathrm{in}_{\prec}(f)$ is a square-free monomial for some term order $\prec$} (better if lexicographical) {\bf  on $S$}, and let $\C_f$ be the set of ideals of $S$ defined, recursively, like follows:

\begin{itemize}
\item[(a)] $(f)\in\C_f$;
\item[(b)] If $I\in\C_f$, then $I:J\in \C_f$ for all $J\subseteq S$;
\item[(c)] If $I,J\in\C_f$, then both $I+J$ and $I\cap J$ belong to $\C_f$.
\end{itemize}

If $f$ is an irreducible polynomial, $\C_f$ consists of only the principal ideal generated by $f$, but otherwise things can get interesting. Let us give two guiding examples:

\begin{itemize}
\item[(i)] If $u:=x_1\cdots x_m$, then the associated primes of $(u)$ are $(x_1),\ldots ,(x_m)$. Furthermore all the ideals of $S=\Bbbk[x_1,\ldots ,x_m]$ generated by variables are sums of the principal ideals above, and all square-free monomial ideals can be obtained by intersecting ideals generated by variables. Therefore, any square-free monomial ideal belongs to $\C_u$, and one can check that indeed:
\[\C_u=\{\mbox{square-free monomial ideals of }S\}.\]
\item[(ii)] Let $X=(x_{ij})$ be an $m\times n$ matrix of variables, with $m\leq n$. For positive integers \ $a_1<\ldots <a_{k} \leq m$ \ and\  $b_1<\ldots <b_{k} \leq n$, \ recall the standard notation for the corresponding $k$-minor:
\[ 
[a_1,\ldots , a_{k}|b_1,\ldots ,b_{k} ]:=\det \left(
\begin{array}{cccc}
x_{a_{1}b_{1}}&x_{a_{1}b_{2}}&\cdots &x_{a_{1}b_{k}}\\
\vdots&\vdots&\ddots&\vdots\\
x_{a_{k}b_{1}}&x_{a_{k}b_{2}}&\cdots &x_{a_{k}b_{k}}\\
\end{array}
\right).
\]
For $i=0,\ldots ,n-m$, let $\delta_i:=[1,\ldots , m|i+1,\ldots ,m+i]$. Also, for $j=1,\ldots ,m-1$ set $g_j:=[j+1,\ldots , m|1,\ldots ,m-j]$ and $h_j:=[1,\ldots ,m-j|n-m+j+1,\ldots ,n]$. 

Let $\Delta$ be the product of the $\delta_i$'s, the $g_j$'s and the $h_j$'s:
\[\Delta:=\prod_{i=0}^{n-m}\delta_i\cdot\prod_{j=1}^{m-1}g_jh_j.\]
By considering the lexicographical term order $\prec$ extending the linear order
\[x_{11}>x_{12}>\ldots x_{1n}>x_{21}>\ldots >x_{2n}>\ldots >x_{m1}>\ldots >x_{mn},\]
we have that 
\[\init(\Delta)=\prod_{i=0}^{n-m}\init(\delta_i)\cdot\prod_{j=1}^{m-1}\init(g_j)\init(h_j)=\prod_{\substack{ i\in\{1,\ldots ,m\} \\ j\in\{1,\ldots ,n\}}}x_{ij}\] 
is a square-free monomial. Since each $(\delta_i)$ belongs to $\C_{\Delta}$, the height-($n-m+1$) complete intersection
\[J:=(\delta_0,\ldots ,\delta_{n-m})\]
is an ideal of $\C_{\Delta}$ too. Notice that the ideal $I_m(X)$ generated by all the maximal minors of $X$ is a height-($n-m+1$) prime ideal containing $J$. So $I_m(X)$ is an associated prime of $J$, and thus an ideal of $\C_{\Delta}$ by definition. With more effort, one should be able to show that the ideals of minors $I_k(X)$ stay in $\C_{\Delta}$ for any size $k$.
\end{itemize}

The ideals of $\C_f$ have quite strong properties. First of all, $\C_f$ is a finite set by \cite{Sc}. Then, all the ideals in $\C_f$ are radical. Even more, Knutson proved in \cite{Kn} that they have a square-free initial ideal!

In order to produce graded prime ideals $\pp_1,\ldots ,\pp_m$ satisfying conditions {\bf A} (or even {\bf A+}) and {\bf B}, it seems natural to seek for them among the prime ideals in $\C_f$. This is because, at least, $f$ is a good candidate for the polynomial needed for condition {\bf B}: if
$f=f_1\cdots f_r$ is the factorization of $f$ in irreducible polynomials, then for each $A\subseteq \{1,\ldots ,r\}$ the ideal
\[J_A:=(f_i:i\in A)\subseteq S\]
is a complete intersection of height $|A|$. If $\pp$ is an associated prime ideal of $J_A$, then $f$ obviously belongs to $\pp^{|A|}\subseteq \pp^{(|A|)}$. So such a $\pp$ satisfies {\bf B}.

\begin{question}
Does the ideal $\pp$ above satisfy condition {\bf A}? Even more, is it true that for prime ideals $\pp$ as above $\pp^s=\pp^{(s)}$ for all $s\in\NN$?
\end{question}

If the above question admitted a positive answer, Theorem \ref{main} would provide the generalized test ideals of $\pp$. A typical example, is when $J_A=(\delta_0,\ldots ,\delta_{n-m})$ and $\pp=I_m(X)$ (see (ii) above), in which case it is well-known that $I_m(X)^s=I_m(X)^{(s)}$ for all $s\in\NN$ (e.g. see \cite[Corollary 9.18]{BV}.

\begin{remark}
Unfortunately, it is not true that $\pp$ satisfies {\bf B} for all prime ideal $\pp\in\C_f$: for example, consider $f=\Delta$ in the case $m=n=2$, that is $\Delta=x_{21}(x_{11}x_{22}-x_{12}x_{21})x_{21}$.
Notice that $(x_{21},x_{11}x_{22}-x_{12}x_{21})=(x_{21},x_{11}x_{22})=(x_{21},x_{11})\cap (x_{21},x_{22})$, so
\[\pp=(x_{21},x_{11})+ (x_{21},x_{22})=(x_{21},x_{11},x_{22})\in \C_{\Delta}.\]
However $\Delta\notin \pp^{(3)}$.
\end{remark}

\begin{problem}
Find a large class of prime ideals in $\C_f$ (or even characterize them) satisfying condition {\bf B}.
\end{problem}

If $\pp_1,\ldots ,\pp_m$ are prime ideals satisfying {\bf A+}, then (by definition)
\[\overline{\sum_{i\in A}\pp_i}=\sum_{i\in A}\pp_i \ \ \ \forall \ A\subseteq \{1,\ldots ,m\}.\]
If $\pp_1,\ldots ,\pp_m$ are in $\C_f$, then the above equality holds true because $\sum_{i\in A}\pp_i$, belonging to $\C_f$, is a radical ideal.

\begin{problem}
Let $\P_f$ be the set of prime ideals in $\C_f$. Is it true that $\P_f$ satisfies condition {\bf A+}? If not, find a large subset of $\P_f$ satisfying condition {\bf A+}.
\end{problem}

\end{document}